\numberwithin{equation}{section}
\newtheorem{thrm}{Theorem}[section]
\newtheorem{lemma}[thrm]{Lemma}
\newtheorem{prop}[thrm]{Proposition}
\newtheorem{conv}[thrm]{Convention}
\begin{document}

\begin{abstract}
We present a simple explicit construction of hyper-K\"ahler and
hyper-symplectic (also known as neutral hyper-K\"ahler or
hyper-parak\"ahler) metrics in 4D using the Bianchi type groups
of class A. The construction underlies a  correspondence  between
hyper-K\"ahler and hyper-symplectic structures in dimension four.
\end{abstract}

\keywords{hyper-symplectic structures, hyper-parak\"ahler metrics,
hyper-K\"ahler metrics}
\subjclass[2000]{58J60, 53C26, 53C50}

\title[Bianchi type A hyper-symplectic and
 hyper-K\"ahler metrics in 4D]{Bianchi type A hyper-symplectic and
 hyper-K\"ahler metrics in 4D}

\date{\today }

\author{L.C. de Andr\'es}
\address[Luis C. de Andr\'es, Marisa Fern\'andez, Jos\'e A. Santisteban]{%
Universidad del Pa\'{\i}s Vasco\\
Facultad de Ciencia y Tecnolog\'{\i}a, Departamento de Matem\'aticas\\
Apartado 644, 48080 Bilbao\\
Spain}
\email{luisc.deandres@ehu.es}
\email{marisa.fernandez@ehu.es}
\email{joseba.santisteban@ehu.es}
\author{M. Fern\'andez}
\author{S. Ivanov}
\address[Stefan Ivanov]{University of Sofia, Faculty of Mathematics and
Informatics, blvd. James Bourchier 5, 1164, Sofia, Bulgaria}
\address{and Department of Mathematics, University of Pennsylvania,
Philadelphia, PA 19104-6395}
\email{ivanovsp@fmi.uni-sofia.bg}
\author{J.A. Santisteban}
\author{L. Ugarte}
\address[Luis Ugarte]{Departamento de Matem\'aticas\,-\,I.U.M.A.\\
Universidad de Zaragoza\\
Campus Plaza San Francisco\\
50009 Zaragoza, Spain}
\email{ugarte@unizar.es}
\author{D. Vassilev}
\address[Dimiter Vassilev]{ Department of Mathematics and Statistics\\
University of New Mexico\\
Albuquerque, New Mexico, 87131-0001}
\email{vassilev@math.unm.edu}
\maketitle
\tableofcontents

\setcounter{tocdepth}{2}

\section{Introduction}

Manifolds carrying a hyper-symplectic structure are the pseudo-Riemannian
counterpart to hyper-K\"ahler manifolds. They are defined as follows. An
\emph{almost hyper-paracomplex structure} on a $4n$-dimensional manifold $M$
is a triple $(J,P_1,P_2)$ of anticommuting endomorphisms of the tangent
bundle of $M$ satisfying the paraquaternionic identities
\begin{equation*}
J^2=-P_1^2=-P_2^2=-Id, \quad JP_1=P_2.
\end{equation*}
In this case, $(M,J,P_1,P_2)$ is said to be an \emph{almost
hyper-paracomplex manifold}. Moreover, if $g$ is a pseudo-Riemannian metric
on $(M,J,P_1,P_2)$ for which $J$ is an isometry while $P_1$ and $P_2$ are
anti-isometries, then $g$ is called an \emph{almost hyper-parahermitian
metric}. Necessarily $g$ is a neutral metric, that is, of signature $(2n,2n)$. Such a metric gives rise to three $2$-forms on $M$ defined in {a way similar} to
 the K\"ahler forms in the positive definite case (see Sections 2 and 3
for details). When {these three forms} are closed, the structure $(g,J,P_1,P_2)$ is called
\emph{hyper-symplectic} \cite{Hit1}. {In this case the
structures $(J,P_1,P_2)$ are parallel with respect to the
Levi-Civita connection of the neutral metric $g$ \cite{AH,Hit1,DJS} and, in particular, integrable}.
Metrics associated to a hyper-symplectic structure are also called neutral
hyper-K\"ahler \cite{Kam}, para-hyperk\"ahler ~\cite{BV}, hyper-parak\"ahler
\cite{IZ}, etc.

Manifolds with a hyper-symplectic structure have also a rich geometry.
Indeed, the neutral metric is K\"ahler, Ricci flat and its holonomy group is
contained in $Sp(2n,\mathbb{R})$ \cite{Hit1}. Furthermore, in dimension
four, any hyper-symplectic structure
underlies
an anti-self-dual  and Ricci-flat neutral metric. For this
reason such structures have been used in string theory \cite%
{OV,hul,JR,Bar,Hull,CHO,GL} and integrable systems \cite{D12,BM,DW}. {\ In
fact, in \cite{OV} is considered $N=2$ superstring theory, and it is proved
that the critical dimension of such a string is $4$ and that the bosonic
part of the $N=2$ theory corresponds to self-dual metrics of signature $%
(2,2) $.}

There are not known many explicit hyper-symplectic metrics. In dimension
four they were described in \cite{DS,T,KM,GL}. In higher dimensions,
hyper-symplectic structures on a class of compact quotients of $2$-step
nilpotent Lie groups were exhibited in \cite{FPPS}. Also, in \cite{AD} is
given a procedure to construct hyper-symplectic structures on $\mathbb{R}%
^{4n}$ with complete and not necessarily flat associated neutral metrics.

On the other hand, Bourliot, Estes, Petropoulos and Spindel in \cite{BEPS2}
(see also  \cite{BEPS1})
show a classification of self-dual four-dimensional  gravitational instantons.
Their classification
is based on the algebra homomorphisms relating the Bianchi
group and the group $SO(3)$.
In \cite{BEPS1} it is noticed that the groups $VI_{0}$ and $VIII$
of Bianchi classification seem not interesting for gravitational instantons taking into account
their complex nature, but they may play a more physical role in the context of
neutral hyper-K\"ahler metrics.

In this paper, following an idea originally proposed by Hitchin
\cite{Hitt}, we {re}-construct in a simple way the well known
(cohomogeneity-one) hyper-K\"ahler metrics in dimension four arising from the
three-dimensional groups of Bianchi type A and give explicit
construction of hyper-symplectic metrics of signature (2,2) (that
is, of signature $(+,+,-,-)$) some of which seem to be new. We
follow  Hitchin's idea \cite{Hitt} reducing the problem to a
solution of a certain system of evolution equations applying two
results of Hitchin, which assert that an almost hyper-Hermitian
structure (resp. an almost para-hyperhermitian structure) is
hyper-K\"ahler (resp. hyper-symplectic) exactly when the three
K\"ahler forms are closed \cite{Hit} (resp. \cite{Hit1}). In this
way we recover some of the known (Bianchi-type) hyper-K\"ahler metrics in
dimension four
\cite{LP1,LP2,Lor,EH,GH,BGPP,GPop,Gh1,Gh2,GR,Bar,NP,SD,Cv04,VY}.
Our approach seems to be particularly simple and leads quickly to
the explicit form of the considered metrics. In \cite{KM} it was
discovered a correspondence between Bianchi type $IX$
hyper-K\"ahler metric and Bianchi type VIII hyper-symplectic
metric. Our construction shows that the system of evolution
equations describing the hyper-K\"ahler metrics of Bianchi types
$IX,VIII,VII_0,VI_0,II$ coincides with the system describing
hyper-symplectic metrics of Bianchi types $VIII,IX,VI_0,VII_0,II$,
respectively. This extends the correspondence found in \cite{KM}
between hyper-K\"ahler and hyper-symplectic metrics arising from
three dimensional groups $SU(2)$ and $SU(1,1)$ to a correspondence
between hyper-K\"ahler and hyper-symplectic metrics  arising from
the three dimensional groups of Lorentzian and Euclidean motions
and the Heisenberg group, respectively.

\begin{conv}
The triple $(i,j,k)$ denotes any cyclic permutation of $(1,2,3)$.
\end{conv}

\textbf{Acknowledgments} The research was initiated during the
visit of the third author to the Abdus Salam ICTP, Trieste as a
Senior Associate, Fall 2008. He also thanks ICTP for providing the
support and an excellent research environment. S.I. is partially
supported by the Contract 181/2011 with the University of Sofia
`St.Kl.Ohridski'. S.I and D.V. are partially supported by Contract
``Idei", DO 02-257/18.12.2008 and DID 02-39/21.12.2009. This work
has been also partially supported through grant MCINN (Spain)
MTM2008-06540-C02-01/02. Thanks are due also to the referees for valuable comments improving the clarity of the paper.

\section{Hyper-K\"ahler metrics in dimension four}

In this section we recover some of the known hyper-K\"ahler metrics in dimension four. To this end, we lift the special structure on
the non-Euclidean Bianchi type groups of class A to a hyper-K\"ahler metric
on its product with (an interval in) the real line.

Let us recall firstly that an \emph{almost hypercomplex structure} on a $4n$
dimensional manifold $M$ is a triple $(J_1,J_2,J_3)$ of almost complex
structures on $M$ satisfying the quaternionic identities, that is,
\begin{equation*}
J_1^2=J_2^2=J_3^2=-1, \quad J_1J_2=-J_2J_1=J_3.
\end{equation*}
An almost hypercomplex manifold $(M,J_1,J_2,J_3)$ is said to be \emph{almost
hyper-Hermitian} if there exists a Riemannian metric $g$ on $M$ for which
each of the almost complex {structures}
$J_s$, $s=1,2,3$, is an isometry,
that is, they satisfy the compatibility conditions
\begin{equation*}
g(J_s\cdot,J_s\cdot)=g(\cdot,\cdot), \quad s=1,2,3.
\end{equation*}
In this case, we can define the  fundamental 2-forms $F_s$ of the almost
hyper-Hermitian manifold $(M,g,J_1,J_2,J_3)$ by
\begin{equation}\label{e:fund 2-forms}
F_s(\cdot,\cdot)=g(J_s\cdot,\cdot),\quad s=1,2,3.
\end{equation}
When the structures $J_s$, $s=1,2,3$, are parallel with respect to the
Levi-Civita connection, $(M,g,J_1,J_2,J_3)$ is said to be a \emph{%
hyper-K\"ahler} manifold.
A result of Hitchin~\cite{Hit} states that the fundamental $2$-forms are
closed exactly when the almost hyper-Hermitian structure is hyper-K\"ahler.

Let $G$ be a three dimensional Lie group and let $\{e^1(t),e^2(t),e^3(t)\}$ be a global basis of 1-forms on $G$
for each $t\in I$, where $I\subset \mathbb{R}$ is a connected interval in the real line.
We consider the almost hyper-Hermitian structure on $G\times I$ defined by the following fundamental 2-forms
\begin{equation}  \label{4inst}
\begin{aligned}
F_1=e^1(t)\wedge e^2(t)+e^3(t)\wedge f(t)dt,\\
F_2=e^1(t)\wedge e^3(t)-e^2(t)\wedge f(t)dt,\\
F_3=e^2(t)\wedge e^3(t)+e^1(t)\wedge f(t)dt,
\end{aligned}
\end{equation}
where $f(t)$ is function of $t\in I$ which does not vanish. {Using $\{e^1(t),e^2(t),e^3(t), f(t)dt \}$ as a positively oriented orthonormal basis the fundamental 2-forms are self-dual (SD). When $M$ is hyper-K\"ahler it is necessarily anti-self-dual (ASD) and Ricci flat.}
With the help of Hitchin's theorem \cite{Hit}, it is straightforward to
prove the next basic for our purposes result.

\begin{prop}\label{p:hitchin evolution}
\label{hitsu2} The almost hyper-Hermitian structure $(F_1,F_2,F_3)$ is
hyper-K\"ahler if and only if
\begin{equation}  \label{4inst1}
de^{12}(t_0)=de^{13}(t_0)=de^{23}(t_0)=0,
\end{equation}
for some $t_0\in I$,
and the following evolution equations hold
\begin{equation} \label{evol4}
\frac{\partial}{\partial t}e^{ij}(t)=-f(t)de^k(t).
\end{equation}
Here, $\{i,j,k\}$ denotes an even permutation of $\{1,2,3\}$ and $e^{ij}(t)=
e^i(t)\wedge e^j(t)$.

The hyper-K\"ahler metric is given by
\begin{equation}  \label{hypkel4}
g=(e^1(t))^2+(e^2(t))^2+(e^3(t))^2+f^2(t)dt^2.
\end{equation}
\end{prop}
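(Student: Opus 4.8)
The plan is to apply Hitchin's theorem, which reduces the hyper-Kähler condition to the simultaneous closedness of the three fundamental 2-forms $F_1,F_2,F_3$ on the product $G\times I$. So the entire proof consists of computing $dF_s$ for $s=1,2,3$ and determining exactly when all three vanish.

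First I would fix the decomposition of the exterior derivative on $G\times I$ into the part $d_G$ along the group directions and the part $dt\wedge\partial_t$ along the interval. Applying this to, say, $F_1 = e^{12}(t) + e^3(t)\wedge f(t)\,dt$, I get two types of contributions: the $d_G$-derivative of each summand, and the $\partial_t$-derivative paired against $dt$. The key observation is that $dt\wedge dt=0$ kills the $dt$-differentiation of the $e^3\wedge f\,dt$ term, so that term only contributes through its $d_G e^3$ piece, while the $e^{12}$ term contributes both $d_G e^{12}$ and $\partial_t e^{12}\wedge dt$. Collecting by form-type, $dF_1=0$ splits into the pure $G$-part $d_G e^{12}(t)=0$ and the mixed $dt$-part $\partial_t e^{12}(t)\wedge dt + f(t)\,(d_G e^3(t))\wedge dt = 0$ (with a sign from moving $dt$ past $e^3$). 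Doing the same for $F_2$ and $F_3$, and tracking the signs coming from the cyclic/anticyclic placement of the indices in \eqref{4inst}, I expect the three mixed-parts to collapse precisely to the evolution equations \eqref{evol4}, namely $\partial_t e^{ij}(t) = -f(t)\,de^k(t)$ for even permutations $\{i,j,k\}$.

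Next I would handle the pure $G$-parts. Naively $dF_s=0$ demands $d_G e^{ij}(t)=0$ for all $t$, not merely at one point $t_0$. The point is to show that the evolution equations force $d_G e^{ij}(t)$ to be $t$-independent, so that vanishing at a single $t_0$ propagates to all $t$. Concretely, I would differentiate $d_G e^{ij}(t)$ in $t$ and commute $d_G$ with $\partial_t$; substituting the evolution equation \eqref{evol4} turns $\partial_t\, d_G e^{ij}$ into $-d_G\big(f(t)\,d_G e^k(t)\big)=-f(t)\,d_G(d_G e^k(t))=0$ since $d_G^2=0$ and $f$ depends only on $t$. Hence each $d_G e^{ij}(t)$ is constant in $t$, and \eqref{4inst1} at one $t_0$ upgrades to closedness for all $t$. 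This is the step I expect to be the main subtlety: it is what lets the hypothesis be imposed at a single time rather than along the whole interval, and it is exactly where the interplay between $d_G^2=0$ and the evolution flow is used.

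Finally, the metric \eqref{hypkel4} is read off directly from the compatibility relations \eqref{e:fund 2-forms}: declaring $\{e^1(t),e^2(t),e^3(t),f(t)\,dt\}$ to be an oriented orthonormal coframe makes each $F_s(\cdot,\cdot)=g(J_s\cdot,\cdot)$ of the stated self-dual form, and the induced metric is the sum of squares in \eqref{hypkel4}; this identification is routine once the $J_s$ are fixed by \eqref{4inst}. Assembling the pieces, closedness of all $F_s$ is equivalent to \eqref{4inst1} together with \eqref{evol4}, and by Hitchin's theorem this is equivalent to the structure being hyper-Kähler, completing the proof.
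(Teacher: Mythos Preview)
Your proposal is correct and follows essentially the same approach as the paper: compute $dF_s$, split into the pure $G$-component $de^{ij}(t)$ and the $dt$-component, identify the latter with the evolution equations \eqref{evol4}, and then use $d^2=0$ together with \eqref{evol4} to show $de^{ij}(t)$ is $t$-independent so that \eqref{4inst1} at a single $t_0$ suffices. The paper's proof is organized identically, just phrased slightly more tersely without the explicit $d_G$ versus $dt\wedge\partial_t$ notation.
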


\begin{proof}
Taking the exterior derivatives in \eqref{4inst} and separating the
variables, we obtain
\begin{equation}  \label{der}
\begin{aligned}
dF_1=de^{12}(t)+\Big[\frac{\partial}{\partial
t}e^{12}(t)+f(t)de^3(t)\Big]\wedge dt,\\
dF_2=de^{13}(t)+\Big[\frac{\partial}{\partial
t}e^{13}(t)-f(t)de^2(t)\Big]\wedge dt,\\
dF_3=de^{23}(t)+\Big[\frac{\partial}{\partial
t}e^{23}(t)+f(t)de^1(t)\Big]\wedge dt.
\end{aligned}
\end{equation}
The equations \eqref{der} imply that
all the three 2-forms $F_i$ are closed precisely when
\eqref{evol4} holds and $de^{12}(t)=de^{13}(t)=de^{23}(t)=0$ for all $t\in I$.
The latter condition is equivalent to \eqref{4inst1} because
by \eqref{evol4} we have that $\frac{\partial}{\partial t} ( de^{ij}(t) ) = d(f(t)\, de^k(t)) = f(t)\, d^2e^k(t) =0$
for all $t$, that is, $de^{ij}(t)$ is constant on the connected interval~$I$.
Finally, Hitchin theorem \cite{Hit} completes the proof.
\end{proof}

In the next table we recall the Bianchi classification \cite{Ketal} (up to isomorphism) of the three dimensional Lie algebras using $de^{1}=-b_{1}e^{23}$,
$de^{2}=-a\,e^{12}-b_{2}e^{31}$, and $de^{3}=-b_{3}e^{12}+a\,e^{31}$.
\medskip

\newpage

\begin{table}[h]
{\setlength{\extrarowheight}{3pt}\tiny
\begin{tabular}{|c|c|c|c|c|c|c|c|c|c|c|c|}
\hline
Type & $I$ & $II$ & $VI_{0}$ & $VII_{0}$ & $VIII$ & $IX$ & $V$ & $IV$ & $VII_a, a> 0$ & $III$
&$VI_a, 0<a\not=1$ \\ \hline
$a$ & $0$ & $0$ & $0$ & $0$ & $0$ & $0$ & $1$ & $1$ & $a$ & $1$ & $a$ \\
\hline
$b_{1}$ & $0$ & $0$ & $0$ & $0$ & $1$ & $1$ & $0$ & $0$ & $0$ & $0$ & $0$ \\
\hline
$b_{2}$ & $0$ & $0$ & $1$ & $1$ & $1$ & $1$ & $0$ & $0$ & $1$ & $1$ & $1$
\\ \hline
$b_{3}$ & $0$ & $1$ & $-1$ & $1$ & $-1$ & $1$ & $0$ & $1$ & $1$ & $-1$ & $-1$
\\ \hline
$G$& Abelian & Heisenberg & Poincare a.k.a.& Euclidean & Lorentz & Rotations
&  &  &  &  & \\
&  & a.k.a. Nil & Lorentzian motions & motions & $SU(1,1)$ & $SU(2)$ &  &  &  &  &  \\ \hline
& \multicolumn{6}{|c|}{type A (unimodular)} & \multicolumn{5}{|c|}{type B
(non-unimodular)} \\ \hline
\end{tabular}\vspace{5pt}
\caption{Bianchi classification}}
\end{table}

{Our goal is to seek the explicit solution of the system given in the
previous proposition for each of the three dimensional Bianchi type groups. We will consider the class of left-invariant evolutions, that is to say, $\{e^1(t),e^2(t),e^3(t)\}$ is a basis
of $\mathfrak{g}^*$ for all $t\in I$, $\mathfrak{g}$ being the Lie algebra of $G$, i.e., for some function $c^i_j(t)$ we have
\begin{equation}\label{e:gen evol}
e^i(t) = c^i_1(t)\, e^1 + c^i_2(t)\, e^2 + c^i_3(t)\, e^3, \quad i=1,2,3,
\end{equation}
with four-dimensional metric \eqref{hypkel4} taking the form $g=f^2(t)\,dt^2\ +\ c^k_i(t)\,c^k_j(t)\,  e^i\odot e^j $. This special class of evolutions restricts the applicability of
our method to the groups of type A.}

\begin{lemma}\label{left-invariant evolution}
Let $\{e^1(t),e^2(t),e^3(t)\}$ be a basis of left-invariant 1-forms on $G$
for each $t\in I$. Then, \eqref{4inst1} is satisfied if and only if
the Lie group $G$ is of Bianchi type A.
\end{lemma}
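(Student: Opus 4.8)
The plan is to recognize that condition \eqref{4inst1} is really a basis-free closedness statement about the Lie algebra $\mathfrak{g}$ of $G$, and then to match it with unimodularity. First I would note that for the fixed parameter $t_0$ the triple $\{e^1(t_0),e^2(t_0),e^3(t_0)\}$ is a basis of $\mathfrak{g}^*$, so the three $2$-forms $e^{12}(t_0),e^{13}(t_0),e^{23}(t_0)$ form a basis of $\Lambda^2\mathfrak{g}^*$. Hence \eqref{4inst1} holds if and only if the exterior derivative $d\colon\Lambda^2\mathfrak{g}^*\to\Lambda^3\mathfrak{g}^*$ is identically zero. In particular this condition does not depend on $t_0$ nor on the chosen left-invariant basis, which is what allows it to be tested in the canonical Bianchi coframe.

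Next I would identify the vanishing of $d$ on $\Lambda^2\mathfrak{g}^*$ with the vanishing of the trace of the adjoint representation. Fix a nonzero volume element $\nu\in\Lambda^3\mathfrak{g}^*$. In dimension three the map $X\mapsto\iota_X\nu$ is a linear isomorphism $\mathfrak{g}\to\Lambda^2\mathfrak{g}^*$, so every $2$-form is $\iota_X\nu$ for a unique $X$; since $\nu$ is of top degree, $d\nu=0$, and Cartan's formula yields
\[
d(\iota_X\nu)=\mathcal{L}_X\nu-\iota_X\,d\nu=\mathcal{L}_X\nu=-\operatorname{tr}(\operatorname{ad}_X)\,\nu .
\]
Therefore $d$ annihilates all of $\Lambda^2\mathfrak{g}^*$ precisely when $\operatorname{tr}(\operatorname{ad}_X)=0$ for every $X\in\mathfrak{g}$, i.e.\ exactly when $\mathfrak{g}$ is unimodular. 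As a concrete cross-check one may instead compute directly in the canonical coframe from the table: using $de^1=-b_1e^{23}$, $de^2=-ae^{12}-b_2e^{31}$, $de^3=-b_3e^{12}+ae^{31}$ a short computation gives $de^{12}=de^{13}=0$ and $de^{23}=-2a\,e^{123}$, so that \eqref{4inst1} reduces to $a=0$.

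Finally I would invoke the classification recorded in the table above: the unimodular three-dimensional Lie algebras are exactly those of Bianchi type A (equivalently, those with $a=0$), the non-unimodular ones forming type B. Chaining the three steps gives that \eqref{4inst1} is satisfied if and only if $G$ is of Bianchi type A. I expect the only non-routine point to be the first step --- seeing that \eqref{4inst1} is a coordinate-free condition forcing $d$ to vanish on all of $\Lambda^2\mathfrak{g}^*$, hence equivalent to unimodularity; once this is in place the reduction to the table is immediate and explains why, as remarked before the lemma, the construction is confined to type A groups.
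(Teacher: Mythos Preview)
Your proof is correct and somewhat more conceptual than the paper's. The paper argues directly in coordinates: it writes $e^i(t_0)=c^i_j\,e^j$ in the canonical Bianchi coframe, computes $de^{ij}(t_0)=c^{ij}_{23}\,de^{23}=-2a\,c^{ij}_{23}\,e^{123}$ (using the same identity $de^{23}=-2a\,e^{123}$ you check), and then observes that for $a\neq 0$ the vanishing of all $c^{ij}_{23}$ forces $\det(c^i_j)=0$, contradicting that $\{e^i(t_0)\}$ is a basis. Your route instead abstracts the first step: since $\{e^i(t_0)\}$ is a basis of $\mathfrak g^*$, the $e^{ij}(t_0)$ span $\Lambda^2\mathfrak g^*$, so \eqref{4inst1} is equivalent to $d\vert_{\Lambda^2\mathfrak g^*}=0$; Cartan's formula then identifies this with $\operatorname{tr}(\operatorname{ad}_X)=0$ for all $X$, i.e.\ unimodularity, which by the table is exactly $a=0$. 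The paper's argument is more elementary and self-contained (no Lie-derivative identity needed), while yours makes transparent why the condition is basis-independent and why the class~A/class~B dichotomy---which is precisely the unimodular/non-unimodular split---is the relevant one; in effect your determinant-free observation that the $e^{ij}(t_0)$ form a basis replaces the paper's contradiction via $\det(c^i_j)=0$.
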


\begin{proof}
Let us fix the basis $\{e^1(t_0),e^2(t_0),e^3(t_0)\}$ of $\mathfrak{g}^*$. Then, there are
constants $c^i_j\in\mathbb{R}$ such that
$$
e^i(t_0)= c^i_1\, e^1 + c^i_2\, e^2 + c^i_3\, e^3, \quad i=1,2,3,
$$
where $\{e^1,e^2,e^3\}$ denotes the basis of $\mathfrak{g}^*$ given in Table~1, i.e.
satisfying $de^{1}=-b_{1}e^{23}$,
$de^{2}=-a\, e^{12}-b_{2}e^{31}$ and $de^{3}=-b_{3}e^{12}+a\, e^{31}$. Thus
$e^{ij}(t_0)= c^{ij}_{12} e^{12} + c^{ij}_{13} e^{13} + c^{ij}_{23} e^{23}$, where $c^{ij}_{rs}= c^i_r c^j_s - c^i_s c^j_r$.

Since $de^{12}=de^{13}=0$ and $de^{23}=-2a\, e^{123}$, we have that condition \eqref{4inst1} is equivalent to
$$
0= de^{ij}(t_0)= c^{ij}_{23} de^{23} = -2a\, c^{ij}_{23} e^{123}
$$
for $(i,j)=(1,2)$, $(1,3)$ and $(2,3)$. Now, $G$ is of Bianchi type B if and only if $a\not=0$,
and therefore \eqref{4inst1} is satisfied if and only if $c^{12}_{23}=c^{13}_{23}=c^{23}_{23}=0$. But, the latter condition implies $\det (c^i_j) =0$, which contradicts the fact that $\{e^1(t_0),e^2(t_0),e^3(t_0)\}$ is a basis.

\end{proof}

 {In the following sub-sections for each of the Bianchi type A groups we explicitly solve  the system of Proposition \ref{p:hitchin evolution} under the restrictive assumption that the evolution of the invariant one forms is diagonal. Specifically, given a
basis $\{e^1,e^2,e^3\}$ of $\mathfrak{g}^*$, we will consider the "diagonal" evolution
\begin{equation}  \label{ev}
e^1(t)=f_1(t)e^1, \quad\quad e^2(t)=f_2(t)e^2, \quad\quad e^3(t)=f_3(t)e^3,
\end{equation}
where $f_1,f_2,f_3$ are non-vanishing functions of $t\in I$. The corresponding "diagonal" metric is given by
\begin{equation}\label{e:tri-axialform}
g=f_1^2(e^1)^2+f_2^2(e^2)^2+f_3^2(e^3)^2+f^2dt^2.
\end{equation}
The function $f$ is introduced for convenience in order to identify the metrics obtained by our method with the known explicit examples of four dimensional hyper-K\"ahler metrics.
We note explicitly that according  to \cite{T} a cohomogeneity-one Einstein metric whose general form is
\[
g=dT^2 + h_{ij}(T)\, e^i\odot e^j
\]
can be assumed to be  diagonal, $h_{ij}=0$ for $i\not= j$, when $\{e^i\}$ are the left invariant forms of a Bianchi type VIII or IX group - first one diagonalizes $( h_{ij})$ at a time $T_0$ and then show that the Ricci flatness implies the vanishing of all off-diagonal terms.  On the hand a diagonalization of $( h_{ij})$ corresponds to a rotation of the matrix $(c^i_j)$ which in turn becomes a rotation of the fundamental 2-forms \eqref{e:fund 2-forms}. Such a rotation might lead to non-closed 2-forms. Thus our assumption of a diagonal evolution is restrictive in all cases. Our next task is to consider case by case the Bianchi type A groups.}

\subsection{The group $SU(2)$, Bianchi type $IX$}

Let $G=SU(2)=S^3$ be described by the structure equations
\begin{equation}  \label{su2}
de^i=-e^{jk}.
\end{equation}
In terms of Euler angles, the left invariant 1-forms $e^i$ are given by
\begin{equation}  \label{eulersu2}
\begin{aligned} e^1&=&\sin\psi d\theta-\cos\psi\sin\theta d\phi,\quad
e^2&=&\cos\psi d\theta+\sin\psi\sin\theta d\phi,\quad e^3&=&d\psi
+\cos\theta d\phi. \end{aligned}
\end{equation}
We evolve the $SU(2)$ structure as in \eqref{ev}.
Using \eqref{su2} we reduce the evolution equations \eqref{evol4} to the
following system of ODEs
\begin{equation}  \label{evol4f}
\frac{d}{d t}(f_1f_2)=ff_3,\quad \frac d{dt}(f_1f_3)=ff_2, \quad \frac
d{dt}(f_2f_3)=ff_1.
\end{equation}
The system \eqref{evol4f} is equivalent to the following
"BGPP" system (see \cite{BGPP})
\begin{equation}  \label{ah}
\frac {d}{dt}f_1=f\frac{f_2^2+f_3^2-f_1^2}{2f_2f_3}, \quad \frac {d}{dt}f_2=f%
\frac{f_3^2+f_1^2-f_2^2}{2f_1f_3}, \quad \frac {d}{dt}f_3=f\frac{%
f_1^2+f_2^2-f_3^2}{2f_1f_2}.
\end{equation}
The system \eqref{ah} admits the triaxial Bianchi IX BGPP \cite{BGPP}
hyper-K\"ahler metrics by taking $f=f_1f_2f_3$ and all $f_i$
different (see also \cite{GPop}), and the Eguchi-Hanson \cite{EH}
hyper-K\"ahler metric by letting two of the functions $f_1,\, f_2, \, f_3$
equal to each other.

\subsubsection{The general solution}\label{The general solution}
With the substitution $x_{i}=(f_{j}f_{k})^{2}$, the system \eqref{evol4f}
becomes
\begin{equation*}
\frac{dx_{i}}{dr}=2(x_{1}x_{2}x_{3})^{1/4},
\end{equation*}
in terms of the parameter $dr=fdt$. Hence the functions $x_{i}$ differ by a
constant, i.e., there is a function $x(r)$ such that $%
x(r)=x_{1}+a_{1}=x_{2}+a_{2}=x_{3}+a_{3}$. The equation for $x(r)$ is
\begin{equation}
\frac{d{x}}{dr}=2\left( (x-a_{1})(x-a_{2})(x-a_{3})\right) ^{1/4},\ \text{
i.e.\ }dr=\frac{1}{2}\frac{1}{\left( (x-a_{1})(x-a_{2})(x-a_{3})\right)
^{1/4}}dx.  \label{e:BIX1}
\end{equation}
If we let $h(x)=\frac{1}{2}\left( (x-a_{1})(x-a_{2})(x-a_{3})\right) ^{-1/4}$
and take into account $x_{i}=(f_{j}f_{k})^{2}$, we see from \eqref{evol4f}
that the functions $f_{i}(x)$ satisfy
\begin{equation*}
\frac{d}{dx}\left( (x-a_{i})^{1/2}\right) =h(x)f_{i}.
\end{equation*}
\noindent Solving for $f_{i}$ we showed that the general solution of \eqref{evol4f} is
\begin{equation}  \label{e:su2}
f_{i}(x)=\frac{(x-a_{j})^{1/4}(x-a_{k})^{1/4}}{(x-a_{i})^{1/4}}
,\quad f(t)=h\left ( x (t)\right )\,x^{\prime }(t), \quad
h(x)=\frac{1}{2}\left( (x-a_{1})(x-a_{2})(x-a_{3})\right) ^{-1/4},
\end{equation}
where $a_{1}, a_{2}$ and $a_{3}$ are constants, and $x$ is an auxiliary
independent variable (substituting any function $x=x(t)$ gives a solution of
\eqref{evol4f} in terms of $t$ in an interval where $f$ and $f_i$, with $i=1,2,3$,
do not vanish).

The resulting hyper-K\"ahler metric is given by \eqref{e:tri-axialform} where
the forms $e^1,e^2,e^3$ and the functions $f_1,f_2,f_3,f$ are
given by \eqref{eulersu2} and \eqref{e:su2}, respectively.

\subsubsection{Eguchi-Hanson instantons}

A particular solution to \eqref{evol4f} is obtained by taking $x=(t/2)^4$
and $a_1=a_2=\frac1{16}a$, $a_3=0$, which gives
\begin{equation}  \label{ehfff}
f_1=f_2=\frac t2,\quad f_3=\frac {1}{2t} (t^{4}-a)^\frac{1}{2}, \quad f=%
\sqrt{\frac{t^4}{t^4-a}},
\end{equation}
where $t\in (0, \infty)$ if $a\leq 0$, and $t\in (a^{\frac{1}{4}}, \infty)$ if $a>0$.
This is the Eguchi-Hanson instanton \cite{EH} with the metric given by
\begin{equation*}
g=\frac{t^2}4\Big((e^1)^2+(e^2)^2\Big)+\frac{t^4-a}{4t^2} (e^3)^2 + \frac{t^4}{t^4-a} (dt)^2,
\end{equation*}
where the 1-forms $e^1,e^2,e^3$ can be found in \eqref{eulersu2}.

\subsubsection{Triaxial Bianchi type $IX$ BGPP metrics \protect\cite{BGPP}}

If we perform the substitution $x=t^4$, $a_1=a^4$, $a_2=b^4$ and $a_3=c^4$ the metric of \eqref{e:su2} turns into the triaxial Bianchi $IX$ metrics discovered in \cite{BGPP} (see
also \cite{GPop,Gh1,Gh2})
\begin{equation}  \label{triax}
\begin{aligned}
&f_1(t)=\frac{(t^4-b^4)^\frac14(t^4-c^4)^\frac14}{(t^4-a^4)^\frac14},\qquad
f_2(t)=\frac{(t^4-a^4)^\frac14(t^4-c^4)^\frac14}{(t^4-b^4)^\frac14},\\
&f_3(t)=\frac{(t^4-a^4)^\frac14(t^4-b^4)^\frac14}{(t^4-c^4)^\frac14},\qquad
f(t)=\frac{2t^3}{(t^4-b^4)^\frac14(t^4-c^4)^\frac14(t^4-a^4)^\frac14}.
\end{aligned}
\end{equation}
{By \cite{BGPP} the above metrics are all singular (cannot be completed) with the only exception being the Euclidean flat space (when $a_1=a_2=a_3$) and the Eguchi-Hanson instanton, whose completion is the cotangent bundle of the complex  projective line $\mathbb{C}P^1$. The latter example was extended to higher dimensions by Calabi \cite{Cal}.  In the derivation above we avoided the use of elliptic functions.}

{
We note that the Atiyah-Hitchin class of complete hyper-K\"ahler metrics is not included in our derivation. In fact, by construction, the natural action of $SU(2)$ extends to a trivial action on the fundamental 2-forms, while it is known that in the Atiyah-Hitchin class  the group $SU(2)$ rotates the self-dual forms \cite{AH}.}

\subsection{The group $SU(1,1)$, Bianchi type $VIII$} (Bianchi type $VIII$ were investigated in \cite{LP1,LP2,Lor}.)
Let $G=SU(1,1)$ be described by the structure equations
\begin{equation}  \label{su11}
de^1=-e^{23}, \quad de^2=-e^{31}, \quad de^3=e^{12}.
\end{equation}
In terms of local coordinates the left invariant forms $e^i$ are given by
\begin{equation}  \label{su11c}
\begin{aligned} e^1&=&d\psi -\cos\theta d\phi,\quad e^2&=&\sinh\psi
d\theta+\cosh\psi\sin\theta d\phi, \quad e^3&=&\cosh\psi
d\theta+\sinh\psi\sin\theta d\phi. \end{aligned}
\end{equation}
We evolve the $SU(1,1)$ structure as
in \eqref{ev}. Using the structure equations \eqref{su11c}, we reduce the
evolution equations \eqref{evol4} to the following system of ODEs
\begin{equation}  \label{evol4f11}
\frac{d}{dt}(f_1f_2)=-ff_3,\quad \frac{d}{dt} (f_1f_3)=ff_2, \quad \frac{d}{%
dt}(f_2f_3)=ff_1.
\end{equation}
Solutions to the above system yield the hyper-K\"ahler metrics \eqref{hypkel4} found in \cite{BGPP}.

\subsubsection{Triaxial Bianchi type $VIII$ metrics.}

Working as in \ref{The general solution}, we obtain the next system for the
functions $x_i$
\begin{equation*}
\frac {dx_2}{dr} =\frac {dx_1}{dr}= 2(x_1x_2x_3)^{1/4}, \qquad \frac {dx_3}{%
dr}= -2(x_1x_2x_3)^{1/4}.
\end{equation*}
Solving for $f_{i}$, as in the derivation \eqref{e:su2}, we find that the
general solution of \eqref{evol4f11} is
\begin{equation}  \label{e:su2g}
\begin{aligned}
&f_{1}(x)=\frac{(x-a_{2})^{1/4}(a_{3}-x)^{1/4}}{(x-a_{1})^{1/4}}, \qquad
f_{2}(x)= \frac{(x-a_{1})^{1/4}(a_{3}-x)^{1/4}}{(x-a_{2})^{1/4}},\\
&f_{3}(x)=\frac{(x-a_{1})^{1/4}(x-a_{2})^{1/4}}{(a_{3}-x)^{1/4}},\quad
f(t)=h\left ( x (t)\right )\,x'(t),\quad h(x)=\frac{1}{2}\left(
(x-a_{1})(x-a_{2})(a_{3}-x)\right) ^{-1/4}, \end{aligned}
\end{equation}
where $a_{1},a_{2}$ and $a_{3}$ are constants, and $x$ is an auxiliary
independent variable (substituting any function $x=x(t)$ gives a solution of %
\eqref{evol4f} in terms of $t$ in an interval where $f$ and $f_i$,
$i=1,2,3$, do not vanish).

The resulting hyper-K\"ahler metric is given by \eqref{e:tri-axialform} where the forms $e^1,e^2,e^3$ and the functions $f_1,f_2,f_3,f$ are given by \eqref{su11c} and \eqref{e:su2g}, respectively.

Taking $f=f_1f_2f_3$ and all $f_i$ different into \eqref{e:tri-axialform}, we obtain explicit expression
of the triaxial Bianchi $VIII$ solutions indicated in \cite{BGPP}.

A particular solution is obtained by letting $a_1=a_2=0, a_3=\frac{a}{16}, x=(t/2)^4$
which gives
\begin{equation*}
f_1=f_2=\frac12(a-t^4)^{\frac14}, \quad f_3=\frac{t^2}2(a-t^4)^{-\frac14},
\quad f=t(a-t^4)^{-\frac14},
{\quad 0<t^4<a.}
\end{equation*}
The resulting {hyper-K\"ahler} metric is
\begin{equation*}
g=\frac{\sqrt{a-t^4}}{4}\left((e^{1})^2+(e^{2})^2 \right)+\frac{t^4}{4\sqrt{a-t^{4}}}(e^{3})^2+\frac{t^2}{\sqrt{a-t^{4}}}dt^2,
\end{equation*}
\noindent where the forms $e^i$ are given by \eqref{su11c}.

\subsection{The Heisenberg group $H^3$, Bianchi type $II$, Gibbons-Hawking
class}

Consider the two-step nilpotent Heisenberg group $H^3$ defined by the
structure equations
\begin{equation}  \label{heis3}
\begin{aligned} de^1=de^2=0, \qquad de^3=-e^{12}, \end{aligned}
\end{equation}
where we can consider
\begin{equation}\label{heis33}
e^1=dx, \quad e^2=dy, \quad e^3=dz-\frac12xdy+\frac12ydx,
\end{equation}
with $x,y,z$ the global coordinates functions on $H^3$.
Now, we evolve the
structure of $H^3$ according to \eqref{ev}. The structure equations %
\eqref{heis3} reduce the evolution equations \eqref{evol4} to the following
system of ODEs
\begin{equation}  \label{evol4h}
\frac{d}{dt}(f_1f_2)=ff_3,\quad \frac{d}{dt} (f_1f_3)=0, \quad \frac{d}{dt}%
(f_2f_3)=0.
\end{equation}
Working as in the previous example, i.e., using the same substitutions we
see that the function $x_i$ satisfy
\begin{equation*}
\frac {dx_3}{dr} = 2(x_1x_2x_3)^{1/4},\qquad \frac {dx_1}{dr} = \frac {dx_2}{%
dr}=0.
\end{equation*}
The general solution of this system is
\begin{equation}  \label{e:H}
x_1=a, \qquad x_2=b, \qquad x_3=\left (\frac 32 (ab)^{1/4}\, r +c \right)
^{4/3},
\end{equation}
where $a, b$ and $c$ are constants. Therefore, using again $f_i=\left (
\frac {x_jx_k}{x_i}\right )^{1/4} $, the general solution of \eqref{evol4h}
is
\begin{equation}  \label{e:H sols for xi}
\begin{aligned}
& f_1= \left (\frac ba \right )^{1/4}\left (\frac 32
(ab)^{1/4}\, r +c \right) ^{1/3}, \quad f_2= \left (\frac ab \right
)^{1/4}\left (\frac 32 (ab)^{1/4}\, r +c \right) ^{1/3},\\
& f_3= \frac
{\left (ab \right ) ^{1/4}}{\left (\frac 32 (ab)^{1/4}\, r +c \right )
^{1/3}}, \quad f=r'(t),  \end{aligned}
\end{equation}
where  $r=r(t)$ is an arbitrary function for $t$ in an interval where $f$ and $f_i$, $i=1,2,3$ do not vanish.

The resulting hyper-K\"ahler metric is given by \eqref{e:tri-axialform} where the forms $e^1,e^2,e^3$ and the functions $f_1,f_2,f_3,f$ are given by \eqref{heis33} and \eqref{e:H sols for xi}, respectively.

A particular solution is obtained by taking $c=0$ and $a=b=1$ into \eqref{e:H sols for xi}, which gives
\begin{equation*}
f_1=f_2=\lambda r^{1/3}, \qquad f_3=f_1 ^{-1},
\end{equation*}
with $\lambda=\left (\frac 32 \right )^{1/3}$. The substitution
$t=\lambda^2 r^{2/3}$ gives $f_1=f_2=f=t^{\frac12}, \quad
f_3=t^{-\frac12}$, with $t>0$. This is the hyper-K\"ahler metric,
first written in \cite{Lor,LP1},
\begin{equation*}
g=t\Big(dt^2+dx^2+dy^2\Big)+\frac1t\Big(dz-\frac12xdy+\frac12ydx\Big)^2,
\end{equation*}
belonging to the Gibbons-Hawking class \cite{GH} with an $S^1$-action and
known also as Heisenberg metric \cite{GR} (see also \cite{Bar,NP,SD,Cv04,VY}).

\subsection{Rigid motions of euclidean 2-plane-Bianchi $VII_0$}

We consider the group $E_2$ of rigid motions of Euclidean 2-plane defined by
the structure equations
\begin{equation}  \label{euc}
\begin{aligned} &de^1=0, \quad de^2=e^{13},\quad de^3=-e^{12},
\text{\phantom {1}where we can consider}\\ &e^1=d\phi, \quad e^2=\sin\phi\,
dx-\cos\phi\, dy, \quad e^3 =\cos\phi\, dx+\sin\phi\, dy. \end{aligned}
\end{equation}
We evolve the structure as in %
\eqref{ev}. Using the structure equations \eqref{euc} we reduce the
evolution equations \eqref{evol4} to the following system of ODE
\begin{equation}  \label{evol4feuc}
\frac{d}{dt}(f_1f_2)=ff_3,\quad \frac{d}{dt} (f_1f_3)=ff_2, \quad \frac{d}{dt%
}(f_2f_3)=0.
\end{equation}
With the substitution $x_{i}=(f_{j}f_{k})^{2}$, the above system becomes
\begin{equation*}
\frac{dx_{1}}{dr}=0,\qquad \frac{dx_{2}}{dr}=\frac{dx_{3}}{dr}%
=2(x_{1}x_{2}x_{3})^{1/4},
\end{equation*}
in terms of the parameter $dr=fdt$. Hence, there is a function $x(r)$ and
three constants $a_1, \ a_2, \ a_3$, such that, $%
x(r)=x_{2}+a_{2}=x_{3}+a_{3} $, $x_1=a_1$. The equation for $x(r)$ is
\begin{equation}
\frac{d{x}}{dr}=2\left( a_{1}(x-a_{2})(x-a_{3})\right) ^{1/4},\ \text{
i.e.,\ }dr=\frac{1}{2}\frac{1}{\left( a_{1}(x-a_{2})(x-a_{3})\right) ^{1/4}}%
dx.  \label{e:BIX12}
\end{equation}
If we let $h(x)=\frac{1}{2}\left( a_{1}(x-a_{2})(x-a_{3})\right) ^{-1/4}$,
and take into account $x_{i}=(f_{j}f_{k})^{2}$, we see from \eqref{evol4feuc}
that the functions $f_{i}(x)$ satisfy
\begin{equation*}
\frac{d}{dx}\left( (x-a_{i})^{1/2}\right) =h(x)f_{i}, \quad i=2, \ 3.
\end{equation*}
Solving for $f_{i}$ we show that the general solution of \eqref{evol4f} is
\begin{equation}  \label{e:Emot}
\begin{aligned} &f_{1}(x)=\frac{(x-a_{2})^{1/4}(x-a_{3})^{1/4}}{a_{1}^{1/4}}
,\qquad f_{2}(x)=\frac{a_{1}^{1/4}(x-a_{3})^{1/4}}{(x-a_{2})^{1/4}},\qquad
f_{3}(x)=\frac{a_{1}^{1/4}(x-a_{2})^{1/4}}{(x-a_{3})^{1/4}},\\&  f(t)=
h\left ( x (t)\right )\,x'(t),\qquad h(x)=\frac{1}{2}\left(
(a_{1}(x-a_{2})(x-a_{3})\right) ^{-1/4}, \end{aligned}
\end{equation}
where $a_{1},\ a_{2}$ and $a_{3}$ are constants, and $x$ is an auxiliary
independent variable (substituting any function $x=x(t)$ gives a solution of %
\eqref{evol4feuc} in terms of $t$ in an interval where $f$ and $f_i$, $i=1,2,3$, do not vanish).

The resulting hyper-K\"ahler metric is given by \eqref{e:tri-axialform} where the forms $e^1,e^2,e^3$ and the functions $f_1,f_2,f_3,f$ are given by \eqref{euc} and \eqref{e:Emot}, respectively.

\medskip
A particular vacuum solutions of Bianchi type $VII_0$ is obtained by letting
$f_2=f_3^{-1}$ and $f_1=f$, which gives
\begin{equation*}
\frac{d}{dt}(ff_3^{-1})=ff_3,\qquad \frac{d}{dt}(ff_3)=ff_3^{-1},
\end{equation*}
with general solution of the form $ff_3+ff_3^{-1}=Ae^t$, $ ff_3^{-1}-ff_3=Be^{-t}$. Hence,
\begin{equation*}
f=f_1=\frac12(Ae^t+Be^{-t})^{\frac12}(Ae^t-Be^{-t})^{\frac12},\quad
f_3=f_2^{-1}=(Ae^t+Be^{-t})^{-\frac12}(Ae^t-Be^{-t})^{\frac12},
\end{equation*}
where $t>\frac{1}{2}\log\left|\frac {B}{A}\right|$
since $A^2 e^{2t}-B^2 e^{-2t}>0$.
The resulting hyper-K\"ahler metric is
\begin{equation}  \label{hypeuc}
g=\frac14(A^2e^{2t}-B^2e^{-2t})\Big(dt^2+d\phi^2+%
\frac4{(Ae^t-Be^{-t})^2}(e^2)^2 +\frac4{(Ae^t+Be^{-t})^2}(e^3)^2\Big),
\end{equation}
where $e^2,e^3$ are given by \eqref{euc}.  In particular, setting $A=B$ in \eqref{hypeuc} we obtain
\begin{equation*}
g=\frac{A^2}2\sinh{2t}\Big(dt^2+d\phi^2\Big)+\coth t(e^2)^2 +\tanh t(e^3)^2,
\end{equation*}
which is the vacuum solutions of Bianchi type $VII_0$ \cite{Lor,LP1} with
group of isometries $E_2$ \cite{GR}, (see also \cite{VY}).

\subsection{Rigid motions of Lorentzian 2-plane-Bianchi type $VI_0$}
Now we consider the group of rigid motions $E(1,1)$ of Lorentzian 2-plane
defined by the structure equations and coordinates as follows
\begin{equation}  \label{lor}
\begin{aligned}
& de^1=0, \quad de^2=e^{13}, \quad de^3=e^{12},\\
& e^1=d\phi, \quad e^2=\sinh\phi \, dx+\cosh\phi\, dy, \quad e^3 =\cosh\phi\,
dx+\sinh\phi\, dy. \end{aligned}
\end{equation}
We evolve the structure as in \eqref{ev}. Using the structure equations %
\eqref{lor}, the evolution equations \eqref{evol4} give the following system
of ODEs
\begin{equation}  \label{evol4flor}
\frac{d}{dt}(f_1f_2)=-ff_3,\quad \frac{d}{dt} (f_1f_3)=ff_2, \quad \frac{d}{%
dt}(f_2f_3)=0.
\end{equation}
The general solution of (\ref{evol4flor}) is
\begin{equation}  \label{e:evol4flor}
\begin{aligned}
& f_{1}(x)=\frac{(x-a_{2})^{1/4}(a_{3}-x)^{1/4}}{a_{1}^{1/4}}
,\qquad f_{2}(x)=\frac{a_{1}^{1/4}(a_{3}-x)^{1/4}}{(x-a_{2})^{1/4}},\qquad
f_{3}(x)=\frac{a_{1}^{1/4}(x-a_{2})^{1/4}}{(a_{3}-x)^{1/4}},\\
& f(t)=
h\left ( x (t)\right )\,x'(t),\quad h(x)=\frac{1}{2}\left(
(a_{1}(x-a_{2})(a_{3}-x)\right) ^{-1/4}, \end{aligned}
\end{equation}
where $a_{1},\ a_{2}$ and $a_{3}$ are constants, and $x$ is an auxiliary
independent variable (substituting any function $x=x(t)$ gives a solution of %
\eqref{evol4feuc} in terms of $t$ in an interval where $f$ and $f_i$, $i=1,2,3$, do not vanish).

The resulting hyper-K\"ahler metric is given by \eqref{e:tri-axialform} where the forms $e^1,e^2,e^3$ and the functions $f_1,f_2,f_3,f$ are given by \eqref{lor} and \eqref{e:evol4flor}, respectively.

A particular case is obtained by letting  $f_2=f_3^{-1}$ and $f_1=f$, which turns the system \eqref{e:evol4flor} in the form $\frac{d}{dt} (ff_3^{-1})=-ff_3$,  $\frac{d}{dt}%
(ff_3)=ff_3^{-1}$. {This system is integrated trivially,
\begin{equation*}
ff_{3}^{-1}=a\cos t+b\sin t \quad \text{and}\quad  ff_3=a\sin t-b \cos t,
\end{equation*}
hence
\begin{equation*}
f^{2}=(a\cos t+b\sin t)(a\sin t-b \cos t),\quad f_{3}=(a\sin t-b \cos
t)f^{-1}.
\end{equation*}
Therefore,
\begin{equation*}
f=f_{1}  =(a\cos t+b\sin t)^{1/2}(a\sin t-b \cos t)^{1/2},\quad
f_{2}^{-1}=f_{3}  =(a\cos t+b\sin t)^{-1/2}(a\sin t-b \cos t)^{1/2},
\end{equation*}
and the hyper-K\"ahler metric is given by
\begin{multline}  \label{hyplor}
g= \left (a \sin t -b \cos t\right) \left(a \cos t +b \sin t\right)\Big(dt^2+d\phi^2 \Big )
+\frac{ a \cos t +b \sin t}{a \sin t -b \cos t}(e^2)^2 +\frac{a \sin t -b \cos t }{a \cos t +b \sin t}(e^3)^2,
\end{multline}
where} $e^2$ and $e^3$ are defined in \eqref{lor}. Introducing $t_0$
and $r_0$ by letting $r_0=\sqrt{a^2+b^2}$, $\cos t_0=a/\sqrt{a^2+b^2}$ and $%
\sin t_0=b/\sqrt{a^2+b^2}$,
we have that $t\in (t_0,t_0+\frac{\pi}{2})$, and
the above metric can be put in the form
\begin{equation}  \label{e:hyplor}
g=\frac{1}{2}r_0^2\sin2(t-t_0)(dt^2+d\phi^2)+\cot(t-t_{0})(e^2)^2+
\tan(t-t_{0})(e^3)^2.
\end{equation}
After an obvious re-parametrization the metric \eqref{e:hyplor} takes a familiar form
\begin{equation*}
g=a^2\sin{2\tau}\Big(d\tau^2+d\phi^2\Big)+\cot \tau(e^2)^2 +\tan \tau(e^3)^2,
\end{equation*}
which is the vacuum solutions of Bianchi type $VI_0$ \cite{Lor,LP1} with
group of isometries $E_{(1,1)}$ \cite{GR}, (see also \cite{VY}).

\medskip

As a consequence of the previous subsections we have the following simple fact.
\begin{prop}\label{globconf-to-hyperKahler}
Let $G$ be a 3-dimensional Lie group of Bianchi type A. Then, there exists a
complete metric on $G\times \mathbb{R}$
that is globally conformal to a hyper-K\"ahler metric on $G\times \mathbb{R}$.
\end{prop}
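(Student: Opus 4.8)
The plan is to treat the abelian case $I$ separately and to handle the five non-abelian type A groups uniformly using the explicit metrics produced in the preceding subsections. For Bianchi type $I$ (the abelian group) a flat left-invariant metric on $G\times\mathbb{R}=\mathbb{R}^4$ is already hyper-K\"ahler and complete, so the statement is immediate. For each of the remaining type A groups the corresponding subsection exhibits a hyper-K\"ahler metric of the form \eqref{e:tri-axialform}, namely $g=f_1^2(e^1)^2+f_2^2(e^2)^2+f_3^2(e^3)^2+f^2\,dt^2$, which is positive definite on $G\times I$, where $I$ is the open interval of parameter values on which $f_1,f_2,f_3,f$ are all smooth and non-vanishing. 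Being an open interval, $I$ is diffeomorphic to $\mathbb{R}$, so $g$ is a bona fide hyper-K\"ahler metric on $G\times\mathbb{R}$. The idea is to produce a complete metric in the conformal class of $g$ using a single conformal factor depending only on $t$.

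First I would introduce a positive smooth function $u=u(t)$ and the reparametrization $s=s(t)$ determined by $ds=u(t)f(t)\,dt$. The conformally rescaled metric is then
\begin{equation*}
\tilde g=u^2 g=ds^2+(uf_1)^2(e^1)^2+(uf_2)^2(e^2)^2+(uf_3)^2(e^3)^2=ds^2+\gamma_s,
\end{equation*}
where, for each fixed $s$, the coefficients $uf_i$ are constants and hence $\gamma_s$ is a left-invariant Riemannian metric on $G$ depending smoothly on $s$. Since $u$ is at our disposal and $f>0$ on $I$, I would choose $u$ growing fast enough near the two endpoints of $I$ that $\int_I uf\,dt$ diverges at each end; then $s\colon I\to\mathbb{R}$ is an orientation-preserving diffeomorphism, $\tilde g$ is defined on all of $G\times\mathbb{R}$, and by construction $\tilde g$ is globally conformal to the hyper-K\"ahler metric $g=u^{-2}\tilde g$ there.

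It then remains to show that $\tilde g=ds^2+\gamma_s$ is complete. For any piecewise smooth curve $\sigma$ one has $\lvert\dot\sigma\rvert_{\tilde g}\ge\lvert\dot s\rvert$, so the $\tilde g$-length of $\sigma$ bounds the total variation of its $s$-coordinate; consequently every $\tilde g$-bounded set is contained in a strip $G\times[s_0,s_1]$ with $[s_0,s_1]$ compact. On such a strip the positive functions $(uf_i)^2$ are bounded between positive constants, so $\gamma_s$ is uniformly equivalent to the fixed left-invariant metric $\gamma_*=(e^1)^2+(e^2)^2+(e^3)^2$, whence $\tilde g\ge c\,(ds^2+\gamma_*)$ there for some $c>0$. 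Since left-invariant metrics on a Lie group are complete, $(G,\gamma_*)$ is complete, hence so is the product $G\times[s_0,s_1]$ with $ds^2+\gamma_*$; a standard Hopf--Rinow argument then shows that any $\tilde g$-Cauchy sequence, being confined to such a strip, converges, so $\tilde g$ is complete.

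The main obstacle is concentrated at the endpoints of $I$: the hyper-K\"ahler metrics above are in general incomplete precisely there, since some $f_i$ or $f$ degenerates and the ends lie at finite $g$-distance. Thus the crux is to verify that the free conformal factor $u$ can be selected so as to push both ends of $I$ out to $s=\pm\infty$ while keeping each $uf_i$ smooth and positive. Once $s$ sweeps out all of $\mathbb{R}$, completeness follows from the length lower bound $\lvert\dot\sigma\rvert_{\tilde g}\ge\lvert\dot s\rvert$ together with the uniform equivalence of $\gamma_s$ to a complete left-invariant metric on each compact $s$-strip, an argument that automatically absorbs any blow-up or collapse of the fibres as $s\to\pm\infty$.
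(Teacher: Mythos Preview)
Your proof is correct and essentially identical to the paper's: both conformally rescale the hyper-K\"ahler metric by a $t$-dependent factor to put it in the form $ds^2+\gamma_s$ with $s\in\mathbb{R}$, and both establish completeness by projecting curves to the $s$-factor and then comparing $\gamma_s$ to a fixed left-invariant (hence complete) metric on compact $s$-strips via a Cauchy-sequence/Hopf--Rinow argument. The paper merely packages the reparametrization and the conformal factor as two separate steps (first send $(a_0,b_0)\to\mathbb{R}$, then divide by the resulting $dt$-coefficient $\varphi^2$) rather than your single free choice of $u$, and the ``crux'' you flag at the endpoints is no real obstacle, since any smooth $u>0$ on the open interval $I$ with $\int_I uf\,dt$ divergent at each end does the job and $uf_i$ is then automatically smooth and positive throughout.
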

\begin{proof}
In the previous subsections,
for each 3-dimensional Lie group $G$ of Bianchi type A, we have constructed
a hyper-K\"ahler metric of the form $g=g_t+ \left( f(t)dt \right)^2$ on $G\times (a_0,b_0)$,
for some open interval $(a_0,b_0)\subset \mathbb{R}$,
such that the metric $g_t=\sum_{j=1}^3 f_j(t)^2 (e^j)^2$
is a left-invariant metric on $G$ for each $t\in (a_0,b_0)$.
By a suitable change of variables $t=t(r)$ we can assume that $r$ changes from $-\infty$ to $+\infty$ as $t$ changes from $a_o$ to $b_0$. Thus, the considered metrics  take the form $$g=\sum_{j=1}^3 \left( f_j(r) \right)^2 (e^j)^2 + \left (\varphi(r)\right)^2dr^2$$ where $\varphi (r)dr=f(t)dt$, $\varphi>0$, for the fixed $t=t(r)$. This allows to put $g$ in the form $g=\varphi(r)^2\hat g$, where
\begin{equation*}
\hat g=dr^2+\sum_{j=1}^3 \left( f_j/\varphi(r) \right)^2 (e^j)^2=dr^2+g_r.
\end{equation*}
 An adaptation of the proof of completeness of a doubly warped product of complete Riemannian manifolds \cite{ONeill} shows that $\hat g$ is complete. In fact, following \cite{ONeill} consider a Cauchy sequence $\{(p_i,r_i)\}_{i=1}^\infty$ in $G\times \mathbb{R}$. From the form of the metric it follows for any curve $\gamma$ in $G\times \mathbb{R}$ we have $L(\gamma)\geq L(\pi_2\circ\gamma)$ for the corresponding lengths of the curve and its projection, where $\pi_2$ is the projection from $G\times \mathbb{R}$ to $\mathbb{R}$. This implies $$d_{\hat g}\left((p_i,r_i),(p_k,r_k)\right)\geq |r_i-r_{k}|,$$ hence $\{r_i\}$ is a Cauchy sequence in $\mathbb{R}$. Therefore,  the numbers $r_i$ belong to a fixed compact interval, $|r_i|\leq R$. Thus,  $\inf_{|r|\leq R} \left\vert f_j/\varphi \,(r)\right\vert =\sigma>0$ for $j=1,2,3$ and for any curve $\gamma$ with $|\pi_2\circ \gamma|\leq R$ we have  $L(\gamma)\geq \sigma\,L(\pi_1\circ\gamma)$, where $\pi_1$ is the projection from $G\times \mathbb{R}$ to $G$. Since for any curve $\alpha(s)$, $s\in [0,1]$ in $G$ we can consider its  lift to $G\times \mathbb{R}$ defined by
 $$\gamma(s)=\left(\alpha (s), sr'+(1-s)r \right)$$ for which we have $\alpha=\pi_1\circ\gamma$ and $\pi_2\circ\gamma$ has arbitrarily fixed beginning $r$ and end $r'$ with  $|r-r'|\leq R$, it follows
 $$d_{\hat g}\left((p_i,r_i),(p_k,r_k)\right)\geq \sigma\,d_{g_o}(p_i,p_k)$$ where $g_o=\sum_{j=1}^3  (e^j)^2$, hence $\{p_i\}_{i=1}^\infty$ is a Cauchy sequence in $(G,g_o)$ (use $r=r_i$ and $r'=r_k$ in the above constriction). Noting that the metric $g_o$ is a left-invariant metric on the group $G$ it follows that $\{p_i\}_{i=1}^\infty$ is a convergent sequence in $G$. Thus, $\{(p_i,r_i)\}_{i=1}^\infty$ is a convergent sequence in $(G\times \mathbb{R},\hat g)$. By the Hopf-Rinow theorem the latter is a complete Riemannian manifold.
\end{proof}

\subsection{Contractions} {It is worth observing that the hyper-K\"ahler metrics constructed from the Bianchi groups of type $II$, $VI_o$ and $VII_o$  (and the trivial Abelian case) can also be obtained  using the well known contractions of the Lie algebras $su(2)$ (i.e. type IX) and $su(1,1)$ (i.e. type VIII) to any of the former four.  In \cite{ChCGLPW} and \cite{GLPSt} this idea of exploiting Lie algebra contractions was used to construct explicit metrics of special holonomy. For our purposes, consider the contraction corresponding to the following scaling of $su(2)$
\[
e^1=\hat{e}{^1}, \quad e^2=\lambda \hat{e}{^2}, \quad e^3=\lambda \hat{e}{^3},
\]
where $e^1, \, e^2, \, e^3$ are the generators of $su(2)$ as in \eqref{su2}. Clearly, we have
\[
d\hat{e}{^1}=-\lambda^2\, \hat{e}{^2}\wedge\hat{e}{^3}, \quad d\hat{e}{^2}=- \hat{e}{^3}\wedge\hat{e}{^1}, \quad d\hat{e^3}=- \hat{e}{^1}\wedge\hat{e}{^2},
\]
which as $\lambda\rightarrow 0^+$ corresponds to a contraction to the Lie algebra $VI_0$ given in \eqref{lor}. For each $\lambda>0$ we have that the metric $\hat g = \hat{f_1}^2\, (\hat{e}^1)^2+\hat{f_2}^2\, (\hat{e}^2)^2+\hat{f_3}^2\, (\hat{e}^3)^2 +\hat{h}(x)^2\, dx^2$, $\hat{f}_i=\hat{f}_i(x)$ for $i=1,2,3$ is a hyper-K\"ahler metric provided the following system of ODEs holds true
\begin{equation}  \label{evol4f-bis}
\frac{d}{d x}(\hat{f}_1\hat{f}_2)=\hat{f}_3,\quad \frac d{dx}(\hat{f}_1\hat{f}_3)=\hat{f}_2, \quad \frac
d{dx}(\hat{f}_2\hat{f}_3) = \lambda^2 \hat{f}_1.
\end{equation}
Letting $\lambda\rightarrow 0^+$  we obtain  the system \eqref{evol4flor}.  In this sense, the metric given by \eqref{hyplor} is obtained from the metric defined by \eqref{e:tri-axialform} and \eqref{e:su2} using the contraction between the corresponding Lie algebras. The other possible contractions can be treated analogously. }

\section{Hyper symplectic  metrics in dimension 4}

In this section, following the method of the preceding section, we
present explicit {hyper-symplectic (also called
hyper-parak\"ahler)} metrics in dimension four, of signature
{(2,2). For this, we lift the special structure} on the
non-Euclidean Bianchi type groups of class A, discovered in the
preceding section (Proposition~\ref{hitsu2}), to a
{hyper-symplectic} metric on its product with the real line. The
construction extends the correspondence between Bianchi type IX
hyper-K\"ahler metrics and Bianchi type VIII
{hyper-parak\"ahler} structures discovered in \cite{KM}.

First, we recall that an \emph{almost hyper-paracomplex structure} on a $4n$-dimensional manifold $%
M $ is a triple $(J,P_1,P_2)$ of endomorphisms of the tangent bundle of $M$
satisfying the paraquaternionic identities, namely,
\begin{equation*}
J^2=-P_1^2=-P_2^2=-1, \quad JP_1=-P_1J=P_2.
\end{equation*}
If in addition, $J,P_1$ and $P_2$ are \emph{integrable} (that is,
its corresponding Nijenhuis tensor vanishes), the almost hyper-paracomplex
structure $(J,P_1,P_2)$ on $M$ is called \emph{hyper-paracomplex} structure.
(The Nijenhuis tensor of an endomorphism $P$ of the tangent bundle of $M$ is
given by
\begin{equation*}
N_P(X,Y)=[PX,PY]-P[PX,Y]-P[X,PY]+P^2[X,Y],
\end{equation*}
for all vector fields $X, Y$ on $M$.)

An almost hyper-paracomplex manifold $(M,J,P_1,P_2)$ is said to be
\emph{almost hyper-parahermitian} if there exists a {pseudo-Riemannian} metric $g$ satisfying the compatibility conditions
\begin{equation*}
g(J\cdot,J\cdot)=-g(P_1\cdot,P_1\cdot)=-g(P_2\cdot,P_2\cdot)=g(\cdot,\cdot).
\end{equation*}
The compatible metric $g$ is necessarily of neutral
signature $(2n,2n)$ because, at each point of $M$, there is a local
pseudo-orthonormal frame of vector fields given by
\begin{equation}\label{e:ONB}
\{E_1,\ldots,E_n,JE_1,\ldots,JE_n,P_1E_1,\ldots,P_1E_n,P_2E_1,\ldots,P_2E_n%
\}.
\end{equation}
The \emph{fundamental 2-forms}  are the differential $2$-forms on $M$ defined by
\begin{equation}\label{e:neutral 2-forms}
\Omega_1=g(\cdot,J\cdot),\quad \Omega_2=g(\cdot,P_1\cdot), \quad \Omega_3=g(\cdot,P_2\cdot).
\end{equation}
When these forms are closed the almost hyper-parahermitian structure $%
(g,J,P_1,P_2)$ is said to be \emph{hyper-symplectic} \cite{Hit1} or \emph{hyper-parak\"ahler} \cite{IZ}. This
implies (adapting the computations of Atiyah-Hitchin \cite{AH} for
hyper-K\"ahler manifolds) that the structures $J,P_1$ and $P_2$ are
integrable and parallel with respect to the Levi-Civita connection \cite%
{Hit1,DJS}. {In dimension four  an almost hyper-paracomplex
structure is locally equivalent to an oriented neutral conformal
structure (or an $Sp(1,\mathbb{R})$ structure) and the
integrability implies the anti-self-duality of the corresponding
neutral conformal structure \cite{Kam}.} {In particular, a
four dimensional {hyper-symplectic} structure  is equivalent to
an anti-self-dual (ASD) {and} Ricci-flat neutral metric.} For this
reason such structures have been used in string theory
\cite{OV,hul,JR,Bar,Hull,CHO} and integrable systems
\cite{D12,BM,DW}.

Let $G$ be a three dimensional Lie group and let $\{e^1(t),e^2(t),e^3(t)\}$ be a global basis of 1-forms on $G$
for each $t\in I$, where $I\subset \mathbb{R}$ is a connected interval in the real line.
We consider
the almost hyper-parahermitian structure (or $Sp(1,\mathbb{R})$-structure) on
$G\times I$
defined by the following 2-forms
\begin{equation}\label{p4inst}
\begin{aligned}
\Omega_1=& -e^1(t)\wedge e^2(t)+e^3(t)\wedge f(t)dt, \\
\Omega_2=& e^1(t)\wedge e^3(t)-e^2(t)\wedge f(t)dt, \\
\Omega_3=& e^2(t)\wedge e^3(t)+e^1(t)\wedge f(t)dt,
\end{aligned}
\end{equation}
where $f(t)$ is function of $t\in I$ which does not vanish.

{We use the ordered pseudo-orthonormal basis given by
\eqref{e:ONB} to orient \emph{negatively} the manifold $M=G\times
I$. Then the fundamental 2-forms \eqref{p4inst} constitute a basis
of the self-dual (SD) 2-forms and a hyper-symplectic structure of
the form \eqref{p4inst} is equivalent to an anti-self-dual (ASD)
and Ricci-flat neutral metric.}

With the help of Hitchin's theorem \cite{Hit1}, which states that an almost
hyper-parahermitian structure is hyper-symplectic
exactly when the fundamental 2-forms are closed, $d\Omega_s=0$, $s=1,2,3$, it is straightforward to prove similarly to Proposition~\ref{hitsu2} the
following important fact.

\begin{prop}
\label{hitres} The almost hyper-parahermitian structure $(\Omega_1,\Omega_2,%
\Omega_3)$ is
{hyper-symplectic} if and only if the conditions \eqref{4inst1} are
satisfied
and the following evolution equations hold
\begin{equation}  \label{pevol4}
\frac{\partial}{\partial t}e^{12}(t)=f(t)de^3(t),\quad \frac{\partial}{%
\partial t}e^{13}(t)=f(t)de^2(t), \quad \frac{\partial}{\partial t}%
e^{23}(t)=-f(t)de^1(t).
\end{equation}
The hyper-symplectic metric is given by
\begin{equation}  \label{phypkel4}
g=(e^1(t))^2+(e^2(t))^2-(e^3(t))^2-f^2(t)dt^2.
\end{equation}
\end{prop}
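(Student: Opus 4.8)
The plan is to mimic the proof of Proposition~\ref{hitsu2}, replacing the Riemannian Hitchin criterion by its neutral analogue \cite{Hit1}. First I would compute the exterior derivatives of the three $2$-forms \eqref{p4inst} on $G\times I$, decomposing $d$ into its $G$-part and its $dt$-part and using that $d(f(t)\,dt)=0$ because $f$ depends on $t$ only. Since $dt\wedge dt=0$, a term such as $e^3(t)\wedge f(t)\,dt$ contributes only $f(t)\,de^3(t)\wedge dt$, so that, for instance,
\begin{equation*}
d\Omega_1=-de^{12}(t)+\Big[-\frac{\partial}{\partial t}e^{12}(t)+f(t)\,de^3(t)\Big]\wedge dt,
\end{equation*}
and analogously for $\Omega_2$ and $\Omega_3$. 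Each $d\Omega_s$ splits into a horizontal $3$-form on $G$ and a term carrying $dt$, and these must vanish independently. The horizontal parts vanish precisely when $de^{12}(t)=de^{13}(t)=de^{23}(t)=0$ for every $t\in I$, while the coefficients of $dt$ vanish precisely when the evolution equations \eqref{pevol4} hold; the signs in \eqref{p4inst} are arranged exactly so that these coefficients reproduce \eqref{pevol4}.

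Next, exactly as in Proposition~\ref{hitsu2}, I would upgrade the ``for all $t$'' closedness of the horizontal forms to the single-time condition \eqref{4inst1}. Differentiating in $t$ and inserting \eqref{pevol4} gives $\partial_t\big(de^{ij}(t)\big)=d\big(\partial_t e^{ij}(t)\big)=\pm f(t)\,d^2e^k(t)=0$, because $f$ depends on $t$ only and $d$ here is the exterior derivative along $G$. Hence each $de^{ij}(t)$ is constant along the connected interval $I$, so it vanishes identically iff it vanishes at one point $t_0$, which is exactly \eqref{4inst1}. Invoking Hitchin's theorem for hyper-symplectic structures \cite{Hit1}, namely that closedness of the three fundamental $2$-forms is equivalent to the hyper-symplectic property, then yields the claimed equivalence.

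Finally, the asserted form \eqref{phypkel4} of the metric is read off from the conventions fixed before the statement: with respect to the ordered pseudo-orthonormal coframe $\{e^1(t),e^2(t),e^3(t),f(t)\,dt\}$ attached to the frame \eqref{e:ONB} (which orients $M=G\times I$ negatively), the $2$-forms \eqref{p4inst} are precisely $g(\cdot,J\cdot)$, $g(\cdot,P_1\cdot)$, $g(\cdot,P_2\cdot)$ in the sense of \eqref{e:neutral 2-forms} for the neutral metric $g=(e^1(t))^2+(e^2(t))^2-(e^3(t))^2-f^2(t)\,dt^2$.

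I expect the only genuine subtlety to be the sign bookkeeping: unlike the Riemannian case of Proposition~\ref{hitsu2}, two of the coframe directions are now timelike, so one must track the minus signs both in the defining forms \eqref{p4inst} and in the metric \eqref{phypkel4} to be sure that the system produced by setting the $dt$-coefficients to zero is \emph{exactly} \eqref{pevol4} rather than a sign-variant of it. The closedness computation and the constancy argument for $de^{ij}(t)$ are otherwise routine and structurally identical to the hyper-K\"ahler case.
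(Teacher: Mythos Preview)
Your proposal is correct and follows exactly the route the paper indicates: the paper does not spell out a separate proof but simply says the result is proved ``similarly to Proposition~\ref{hitsu2}'', and your argument is precisely that adaptation, with the sign bookkeeping for the neutral forms \eqref{p4inst} carried out correctly and the same constancy argument reducing the ``for all $t$'' condition to \eqref{4inst1}.
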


As in the previous section, from Lemma~\ref{left-invariant
evolution} it follows that the above Proposition can be applied to
the evolution \eqref{ev} in the case of the Bianchi type A groups
only. For each of the  Bianchi type A groups we shall construct
explicitly the general triaxial hyper-symplectic metric, which is
of the form
\begin{equation}\label{e:tri-axial form}
g=f_1^2(e_1)^2+f_2^2(e_2)^2-f_3^2(e_3)^2-f^2dt^2,
\end{equation}
by solving the corresponding system for the functions
$f_1,f_2,f_3,f$.  We shall see that in each case, we obtain a
system identical to one of the systems encountered in the
hyper-k\"ahler case.

\subsection{Bianchi type $IX$ hyper-symplectic metrics and hyper-K\"ahler Bianchi type $VIII$ hyper-K\"ahler metrics}
Let $G=SU(2)=S^3$ be described by the structure equations \eqref{su2}.
 We evolve the $SU(2)$ structure
according to \eqref{ev}.

Using \eqref{su2}, we reduce the evolution equations
\eqref{pevol4} to the {already considered system
\eqref{evol4f11}.} This establishes a correspondence between
triaxial Bianchi type $IX$ { hyper-symplectic metrics} and
{triaxial} hyper-K\"ahler Bianchi type $VIII$ hyper-K\"ahler metrics.

The general solution is given by \eqref{e:su2g}. Taking $f=f_1f_2f_3$ in %
\eqref{e:su2g} and all $f_i$ different we obtain explicit expression of a
triaxial  {hyper-symplectic} metric \eqref{e:tri-axial form},
{where the forms $e^1,e^2,e^3$ and the functions $f_1,f_2,f_3,f$ are given by \eqref{eulersu2} and \eqref{e:su2g}, respectively.}

A particular solution is obtained by letting $a_1=a_2=0, a_3=\frac{a}{16}$
in \eqref{e:su2g} which gives
\begin{equation*}
f_1=f_2=\frac12(a-t^4)^{\frac14}, \quad f_3=\frac{t^2}2(a-t^4)^{-\frac14},
\quad f=t(a-t^4)^{-\frac14},
{\quad 0<t^4<a.}
\end{equation*}
The resulting {hyper-symplectic } metric is given by
\begin{equation*}
g=\frac14(a-t^4)^{\frac12}\Big(d\theta^2+\sin^2\theta\, d\phi^2\Big)-\frac{t^4%
}{4(a-t^4){^\frac12}} \Big(d\psi+\cos\theta d\phi\Big)^2-\frac{t^2}{%
(a-t^4)^{\frac12}}dt^2.
\end{equation*}

\subsection{Bianchi type $VIII$ hyper-symplectic metrics and hyper-K\"ahler Bianchi type $IX$ hyper-K\"ahler metrics}

Let $G=SU(1,1)$ be defined by the structure equations \eqref{su11}.
 We {evolve} the $SU(1,1)$ structure as
in \eqref{ev}. Using the structure equations \eqref{su11}, the evolution
equations \eqref{pevol4} reduce to the already solved system \eqref{evol4f}. The general solution is of the form \eqref{e:su2} {which has also the expression \eqref{triax}. A substitution of \eqref{triax} and \eqref{su11c} in \eqref{e:tri-axial form} gives the corresponding triaxial {hyper-symplectic} metrics}.

This establishes a correspondence between triaxial Bianchi type
$IX$ hyper-K\"ahler metrics {and {triaxial} Bianchi type $VIII$
hyper-symplectic metrics} discovered in \cite{KM}.

A particular solution to \eqref{evol4f} is given by \eqref{ehfff}, which
results { a  hyper-symplectic} metric in Eguchi-Hanson form \cite%
{DS,T,KM} given by
\begin{multline*}
g=\frac{t^2}4\Big[\Big(d\psi -\cos\theta d\phi\Big)^2+\Big(\sinh\psi
d\theta+\cosh\psi\sin\theta d\phi\Big)^2\Big] \\
-\frac{t^2}4\Big(1-\frac{a}{t^4}\Big)\Big(\cosh\psi
d\theta+\sinh\psi\sin\theta d\phi\Big)^2- \Big(1-\frac{a}{t^4}\Big)%
^{-1}(dt)^2.
\end{multline*}

{Setting $f=-\frac{f_3}t$} one obtains another
hyper-symplectic metric.

\subsection{Bianchi type $II$ hyper-symplectic metrics and hyper-K\"ahler Bianchi type $II$ hyper-K\"ahler metrics}

Consider the two-step nilpotent Heisenberg group $H^3$ defined by the
structure equations \eqref{heis3}. We  evolve the structure as in \eqref{ev}. The structure equations \eqref{heis3}
reduce the evolution equations \eqref{pevol4} to the already solved system %
\eqref{evol4h} {taking $-f_3$ instead of $f_3$. This is  equivalent to consider the two-step nilpotent Heisenberg group $H^3$ defined by the structure equations
$$de_1=de^2=0, \quad de^3=e^{12},\qquad e^1=dx, \quad e^2=dy,\quad e^3=dz+\frac12xdy-\frac12ydx$$
and evolving the structure as in \eqref{ev}.}

The general solution is of the form \eqref{e:H sols for xi}. This
establishes the corresponding form of the general triaxial
hyper-symplectic metric \eqref{e:tri-axial form} where the
functions $f_1,f_2,-f_3,f$ and the 1-forms $e^1,e^2,e^3$ are given
by \eqref{e:H sols for xi} and \eqref{heis33}, respectively.

A particular solution is $f_1=f_2=f=t^{\frac12}, \quad f_3=-t^{-\frac12}$,
 with $t>0$.
This is the hyper-symplectic metric
\begin{equation*}
g=t\Big(-dt^2+dx^2+dy^2\Big)-\frac1t\Big(dz-\frac12xdy+\frac12ydx\Big)^2.
\end{equation*}

\subsection{Bianchi type $VII_0$ hyper-symplectic metrics and hyper-K\"ahler Bianchi type $VI_0$ metrics}

We consider the group $E_2$ of rigid motions of Euclidean 2-plane defined by
the structure equations \eqref{euc}.
We evolve the structure as in \eqref{ev}. Using the structure equations %
\eqref{euc}, the evolution equations \eqref{pevol4} take the form of the
already solved system of ODEs \eqref{evol4flor} with a general solution %
\eqref{e:evol4flor} giving a correspondence with Bianchi $VI_0$
{hyper-K\"ahler metrics.}

When $f_2=f_3^{-1}$,  $ f_1=f$ we have
\begin{equation*}
f=f_1=(a\cos t+b\sin t)^{\frac12}(a\sin t-b\cos t)^{\frac12},\qquad
f_3=f_2^{-1}=(a\sin t-b\cos t)^{\frac12}(a\cos t+b\sin t)^{-\frac12}.
\end{equation*}
Introducing $t_0$ and $r_0$ by letting $r_0=\sqrt{a^2+b^2}$, $\cos t_0=a/\sqrt{a^2+b^2}$ and $\sin t_0=b/\sqrt{a^2+b^2}$, the resulting hyper-symplectic metric can be put in the form
\begin{equation}  \label{e:phyplor}
g=\frac12\,r_0^2\sin 2(t-t_0)\bigl(-dt^2+d\phi^2\bigr)\\
+\cot(t-t_0)(e^2)^2 - \tan(t-t_0)(e^3)^2,
\end{equation}
where $e^2,e^3$ are given by \eqref{euc}.
After an obvious reparametrization,
this metric can be written as
\[
g=\frac12\, r_0^2\sin{2\tau}\big(-d\tau^2+d\phi^2\big)+\cot \tau\big(\sin\phi \,dx
-\cos\phi\, dy\big)^2 -\tan \tau\big(\cos\phi\, dx+\sin\phi\, dy\big)^2.
\]

\subsection{Bianchi type $VI_0$ hyper-symplectic metrics and hyper-K\"ahler Bianchi type $VII_0$ metrics}

Now we consider the group of rigid motions $E(1,1)$ of Lorentzian 2-plane
defined by the structure equations \eqref{lor}. We evolve the structure as
in \eqref{ev}. Using the structure equations \eqref{lor}, the evolution
equations \eqref{pevol4} turn into the solved system of ODEs %
\eqref{evol4feuc} with the general solution given by
\eqref{e:Emot} establishing a correspondence with Bianchi $VI_0$
{hyper-K\"ahler metrics.}

When $f_2=f_3^{-1}, \quad f_1=f$ we have
\begin{equation*}
f=f_1=\frac12(Ae^t+Be^{-t})^{\frac12}(Ae^t-Be^{-t})^{\frac12},\quad
f_3=f_2^{-1}=(Ae^t+Be^{-t})^{-\frac12}(Ae^t-Be^{-t})^{\frac12},
\end{equation*}
and the hyper-symplectic metric is
\begin{equation}  \label{phypeuc}
g=\frac14(A^2e^{2t}-B^2e^{-2t})\Big(-dt^2+d\phi^2+
\frac4{(Ae^t-Be^{-t})^2}(e^2)^2 -\frac4{(Ae^t+Be^{-t})^2}(e^3)^2\Big),
\end{equation}
where $e^2,e^3$ are given by \eqref{lor}
{, and $t>\frac{1}{2}\log\left|\frac {B}{A}\right|$
since $A^2 e^{2t}-B^2 e^{-2t}>0$}.  In particular, letting $A=B$ in \eqref{phypeuc} we obtain
\begin{equation*}
g=\frac{A^2}2\sinh{2t}\Big(-dt^2+d\phi^2\Big)+\coth t\Big( \sinh\phi \,
dx+\cosh\phi\, dy\Big)^2 -\tanh t\Big( \cosh\phi \, dx+\sinh\phi\, dy\Big)^2.
\end{equation*}

\section{Conclusions}
 {Several explicit  cohomogeneity one hyper-K\"ahler (local) metrics of Riemannian and neutral signature were found on four dimensional manifolds $M=G\times I$, $I=(a,b)$.  The group $G$ was assumed to be a three dimensional Bianchi type group, which acts on  $M$ by left translations on the first factor of $M$.  By considering a time dependent evolution of a fixed  left invariant basis of one forms on $G$ we defined an almost hyper-Hermitian structure on $M$. The action of $G$ extends to a trivial  action on the fundamental 2-forms of the hyper-Hermitian structure.  The  hyper-Hermitian structure is hyper-K\"ahler if the defined fundamental 2-forms are closed.  It was found that the latter condition can be fulfilled only in the case of Bianchi type A groups. Restricting the evolution to one of  diagonal type we obtained a system of ODEs whose solutions give a hyper-K\"ahler  metric. For each of the Bianchi type A groups we found the explicit solution of the system of ODEs.  The hyper-K\"ahler metrics of neutral signature, called hyper-symplectic metrics, were found to be in correspondence with the Riemannian  hyper-K\"ahler metrics based on the type of the system of ODEs. Thus, we give new examples of hyper-symplectic metrics and extend the correspondence between  classes of hyper-symplectic metrics and hyper-K\"ahler metrics given in \cite{KM}  in the Bianchi VIII and IX cases to all Bianchi type A cohomogeneity one (with trivial action on the fundamental 2-forms) metrics.
}

\end{document}